\def\ps@pprintTitle{%
  \let\@oddhead\@empty
  \let\@evenhead\@empty
  \let\@oddfoot\@empty
  \let\@evenfoot\@oddfoot
}
\newtheorem{MainTheorem}{Theorem}
\newtheorem{thm}{Theorem}[section]
\newtheorem{cor}[thm]{Corollary}
\newtheorem{lem}[thm]{Lemma}
\newtheorem{prop}[thm]{Proposition}
\newtheorem{rem}[thm]{Remark}
\renewcommand{\epsilon}{\varepsilon}
\begin{document}
\begin{frontmatter}

\title{Monotonous period function for equivariant differential equations with homogeneous nonlinearities}

\author[]{Armengol Gasull}
\ead{Armengol.Gasull@uab.cat}
\author[]{David Rojas\corref{cor1}}
\ead{David.Rojas@uab.cat}

\address{Departament de Matemàtiques, Universitat Autònoma de Barcelona, Edifici C, 08193 Cerdanyola del Vallès (Barcelona), Spain}

\cortext[cor1]{Corresponding author}

%\subjclass[2020]{34C15, 37C27, 70F15}

\begin{abstract}
We prove that the period function of the center at the origin of the $\mathbb{Z}_k$-equivariant differential equation $\dot{z}=iz+a(z\overline{z})^nz^{k+1}, a\ne0,$ is monotonous decreasing for all $n$ and $k$ positive integers, solving a conjecture about them. We show this result as corollary of proving that the period function of the center at the origin of a sub-family of the reversible quadratic centers is monotonous decreasing as well.
\end{abstract}

\begin{keyword}
 Period function, reversible quadratic centers, $Z_k$-equivariant differential equations.
 \MSC[2020]{34C07, 34C23, 34C25}

\end{keyword}

\end{frontmatter}

%\linenumbers
%%%%%%%%%%%%%%%%%%%%%%%%%%%%%%%%%%%%%%%%%%%%%%%%%%%%%%%%

\section{Introduction}
Real planar autonomous analytic differential systems can be written in complex coordinates in the compact form  $\dot{z}=F(z,\overline{z})$, $z\in\mathbb{C}.$ The $\mathbb{Z}_k$-equivariant differential equations are equations of this form that are invariant under a rotation through the angle $2\pi/k$ about the origin.  They are studied in detail in the classical book of Arnold \cite[Chap. 4 \S 35]{Arnold0} or also in~\cite[Sec. 7]{Li}. For them, the phase portrait on each sector of width $2\pi/k$ centered at the origin is repeated $k$ times. In this paper we are concerned with the simplest family of polynomial $\mathbb{Z}_k$-equivariant differential equations with a non-degenerated center at the origin and homogeneous nonlinearities
\begin{equation}\label{Zk} 
	\dot{z}=iz+a(z\overline{z})^nz^{k+1},
\end{equation}
with $n$ and $k$ a positive integers and $0\ne a\in\mathbb{C}.$ It is also a family of differential equations given by simply two complex monomials, as the ones studied in~\cite{AGP}. The qualitative properties of the period function associated to the origin was posed as an open problem in \cite[Prob. 16]{Gasull}. In particular, it is conjectured that it is monotonous decreasing. The main result of this paper is a positive answer to such conjecture.

\begin{MainTheorem}\label{thmA}
	The period function of the center at the origin of the differential equation~\eqref{Zk}, with $n$ and $k$ positive integers and $0\ne a\in\mathbb{C},$ is monotonous decreasing. Moreover it starts being $2\pi$ at the origin and tends to $2(k+n)\pi/(k+2n)$ when the orbits approach to the boundary of the period annulus. 
\end{MainTheorem}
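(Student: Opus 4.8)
The plan is to transform \eqref{Zk} into a single planar reversible quadratic center whose period function is \emph{literally the same} as that of \eqref{Zk}, and then to prove monotonicity for the resulting quadratic family; the last step is where essentially all the work lies. First I would normalize $a$: writing $z=re^{\mathrm i\theta}$ and replacing $z$ by $e^{\mathrm i\phi}z$ sends $a\mapsto a\,e^{-\mathrm ik\phi}$, so without loss of generality $a>0$. In polar coordinates \eqref{Zk} becomes
\begin{equation*}
\dot r = a\,r^{2n+k+1}\cos(k\theta),\qquad \dot\theta = 1 + a\,r^{2n+k}\sin(k\theta).
\end{equation*}
The $\mathbb Z_k$-symmetry forces every periodic orbit to satisfy $r(\theta+2\pi/k)=r(\theta)$, so setting $\psi=k\theta$, $W=a\,r^{2n+k}$ and $\mu=(2n+k)/k>1$, the period collapses to the one-variable expression
\begin{equation*}
T=\int_0^{2\pi}\frac{\mathrm d\psi}{1+W(\psi)\sin\psi},\qquad \frac{\mathrm dW}{\mathrm d\psi}=\frac{\mu\,W^2\cos\psi}{1+W\sin\psi}.
\end{equation*}

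Next I would pass to the variables $p=W\cos\psi$, $q=W\sin\psi$. A direct computation shows that, in the rescaled time $s=kt$, the pair $(p,q)$ satisfies the quadratic system
\begin{equation*}
\dot p=-q+\mu\,p^2-q^2,\qquad \dot q=p+(\mu+1)\,pq,
\end{equation*}
which has a nondegenerate center at the origin and is reversible with respect to $(p,q)\mapsto(-p,q)$; this is the announced sub-family of reversible quadratic centers. One turn of an orbit of \eqref{Zk} corresponds to $k$ turns of $(p,q)$, while time has been rescaled by $k$; the two factors cancel, so the period function of \eqref{Zk} coincides \emph{exactly} with the period function $T(h)$ of this center. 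In particular the central period is the linear one, $2\pi$, and Theorem~\ref{thmA} reduces to showing that $T$ decreases monotonically across the whole period annulus. Here I would record the explicit first integral obtained via the integrating factor $u^{(1-\mu)/\mu}$, namely $H=\tfrac1{1+\mu}u^{(1+\mu)/\mu}+u^{1/\mu}\sin\psi$ with $u=1/W$: the center corresponds to $H=+\infty$, and the period annulus is bounded by the invariant straight line $1+(\mu+1)q=0$, on which $H\equiv0$.

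The heart of the argument is the monotonicity of $T(h)$, and this is the step I expect to be hard. On a level oval $\{H=h\}$ one computes $1+W\sin\psi=\tfrac{\mu}{1+\mu}+h\,u^{-(1+\mu)/\mu}$, so that
\begin{equation*}
T(h)=\int_0^{2\pi}\frac{\mathrm d\psi}{\tfrac{\mu}{1+\mu}+h\,u(\psi,h)^{-(1+\mu)/\mu}}.
\end{equation*}
My plan is to change the integration variable from $\psi$ to $u$, using the reversibility to fold the oval onto a single monotone branch, and then to show that $T'(h)$ keeps a constant sign for all $h\in(0,\infty)$. Establishing that sign is the genuine obstacle: it amounts to proving that an explicit Abelian-type integral never vanishes, which I would attack either by exhibiting the integrand as a manifestly positive combination after an integration by parts, or by casting the relevant integrals as a Chebyshev system and invoking a monotonicity criterion tailored to reversible quadratic centers.

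Finally, to identify the limiting value I would let $h\to0^+$. On the upper half $\sin\psi>0$ the ovals escape to infinity and are swept out in vanishing time, so they contribute nothing in the limit, while on the lower half they hug the invariant line, where $\dot\theta\equiv\mu/(\mu+1)$ is constant; hence
\begin{equation*}
T\longrightarrow \pi\,\frac{\mu+1}{\mu}=\frac{2(k+n)\pi}{k+2n},
\end{equation*}
as claimed. Together with the monotonicity this yields that $T$ decreases from $2\pi$ at the origin to $2(k+n)\pi/(k+2n)$ at the boundary of the period annulus.
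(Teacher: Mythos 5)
Your reduction of equation~\eqref{Zk} to a reversible quadratic center is correct and is in fact the same one the paper uses: your $(p,q)$-system is exactly the paper's system~\eqref{sys_XY} (with $\mu=b=1+2n/k$), the paper merely performs one further linear change to put it in the Loud normal form~\eqref{loud} with $D=-k/(2(k+n))\in(-1/2,0)$, and your period-matching argument (the factor $k$ from the $k$ turns cancelling the factor $k$ from the time rescaling) is the paper's argument verbatim. Your identification of the limit value at the outer boundary via the invariant line, where the angular speed is the constant $\mu/(\mu+1)$, also parallels the paper's computation~\eqref{eq:t}; note only that the claim that the rest of the orbit is swept in vanishing time is not automatic and is precisely what the paper borrows from \cite[Prop.~5.2]{MMV0}.

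The genuine gap is the step you yourself flag as ``the genuine obstacle'': the monotonicity of $T(h)$ for the quadratic center. This is not a technical detail to be filled in by ``an integration by parts'' or by ``invoking a monotonicity criterion tailored to reversible quadratic centers''---it is the entire content of the paper (Theorem~\ref{thmB}, all of Section~3), and the paper explicitly records that this family had resisted all such attempts: every known monotonicity criterion has been checked to fail for $D\in(-1,0)\setminus\{-1/2\}$. In particular, the natural move of applying the Mañosas--Villadelprat/Liu criterion (Proposition~\ref{prop:Liu}) directly to the period integral fails, because with $g=\ell$ the function $\mathbf{\Pi}_{\sigma}(\ell)$ changes sign in $(0,u_r)$. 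The paper's key idea is to sidestep this: after transforming to a potential system, Lemma~\ref{lem:igualtat} shows $2hT'(h)+\frac{1}{D+1}I'(h)=0$ for an auxiliary Abelian integral $I(h)$ with a \emph{different} integrand $g(u)=u(1-u)^{-3(D+1)}$, and the criterion is applied to $I$ instead. Even then, verifying the hypothesis (that $\mathbf{\Pi}_{\sigma}(g)$ does not vanish) requires a substantial exact computation: reformulating it as the non-intersection of two algebraic curves, an implicit-function/involution analysis, resultants and discriminants in the parameter $D$, and Sturm-sequence arguments including work over algebraic extensions $\mathbb{Q}[D_i]$ and interval bounds on an algebraically defined root $D_2$. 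None of this is present, or even sketched, in your proposal; as it stands, Theorem~\ref{thmA} has been reduced (correctly) to an open statement, not proved.
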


When $n=0$ or $k=0$ and the origin is a center, the behavior of the period function of the differential equation~\eqref{Zk} is different. It is either constant, decreasing towards $0,$ or increasing towards infinity, see Proposition~\ref{lem:nou}.

As shown in \cite{Gasull}, the period function of the center at the origin of~\eqref{Zk} is strongly related to the period function of the center at the origin of the family of reversible quadratic centers
\begin{equation}\label{loud0}
    \begin{cases}
	\dot{x}=-y+xy,\\
	\dot{y}=x+Dx^2+Fy^2.
	\end{cases} 
\end{equation}
In particular, as we also will see in Section~\ref{sec:ProofThmA}, with the period function for the period annulus of the origin for the sub-family
\begin{equation}\label{loud}
    \begin{cases}
        \dot{x}=-y+xy,\\
        \dot{y}=x+Dx^2+(D+1)y^2,
    \end{cases}    
\end{equation}
with $D\in(-1/2,0)$.

Among the quadratic centers, the period function of the reversible family~\eqref{loud0} is the one with richer qualitative properties. As the counterpart of the renowned conjecture claiming that quadratic systems have at most four limit cycles, it was conjectured in \cite{Chicone2} that quadratic centers have at most two critical periods (that is, number of maxima and minima of the period function). Since then, the qualitative behavior of the period function of the reversible quadratic centers has attracted the attention of the community \cite{MMV0,MMV,MV}. 

In particular, the family~\eqref{loud} with $D\in(-1/2,0)$ was believed to have monotonous decreasing period function, see \cite{MMV0,MMV}.

In fact, a first indication that the period function could be decreasing is given the computation of its first period constant $P_2,$ which gives its local behavior near the origin. More specifically, if ${\bf T}(\rho), \rho>0,$ denotes the minimal period of the orbit starting at $(\rho,0),$ then
\begin{equation}\label{eq:t2}	
	{\bf T}(\rho)=2\pi +P_2\rho^2+O(\rho^3).
\end{equation}	
For system~\eqref{loud0} it is known that
\[P_2= \frac\pi{12}\big(10D^2+10DF-D+4F^2-5F+1\big),
\]	 
see for instance~\cite{MMV}. Hence, when $F=D+1$ some computations give than $P_2=\pi D(2D+1),$ proving that $P_2<0$ when $D\in(-1/2,0)$ and hence that $T(\rho)$ is locally decreasing at the origin. The period annulus of the origin of system~\eqref{loud} is all the half-plane  $\mathcal{B}:=\{(x,y)\,:\,x<1\}.$ It is proved in~\cite[Sec 5]{MMV0} that the period function is also decreasing at the boundary of~$\mathcal B,$ result that is again coherent with the fact that the period function is decreasing.

There exist a large collection of criteria to verify the monotonicity of the period in the literature, \cite{Chicone,GGV,Villadelprat,Schaaf} are some examples. During the last years, the attempts to prove the monotonicity of the period for the family of quadratic centers~\eqref{loud}  with $D\in(-1/2,0)$ have been numerous and unfortunately unfruitful despite all efforts. In particular, to the best of our knowledge, all known criteria have been checked to fail for $D\in(-1,0)\setminus\{-1/2\}$. A similar case where monotonicity holds in system~\eqref{loud0} is when $F=1-2D,$ \cite{Cho}. To see other regions of monotonicity of the period function see~\cite{Villadelprat0} and its references.

Recently, in \cite{Liu} a new adaptation of the criteria introduced in \cite{Villadelprat} has been used to prove the monotonicity of another different but similar family of quadratic centers, the so-called reversible Lotka-Volterra. These are, reversible quadratic centers that belong also to the family of generalized Lotka-Volterra quadratic centers. In particular, they consist on the sub-family $F=-D$. By using this criteria  we prove the following.

\begin{MainTheorem}\label{thmB}
The period function of the center at the origin of system~\eqref{loud} is monotonous decreasing for $D\in(-1/2,0),$ monotonous increasing for $D\in(-\infty,-1/2)\cup(0,+\infty)$ and isochronous  when $D=-1/2$ and $D=0$. Moreover,

\begin{itemize}[(i)]
		
\item If $D\in(-\infty,-1)\cup(0,+\infty)$ the period annulus is the interior of the homoclinic orbit of the equilibrium at $(-1/D,0)$. The period starts being $2\pi$ at the origin and tends to $+\infty$ when the orbits approach the boundary of the period annulus.
\item If $D\in[-1,0]$ the period annulus is all the half-plane $\mathcal{B}=\{(x,y)\,:\,x<1\}$. Moreover, when $D\ne0,$ the period starts being $2\pi$ at the origin and tends\footnote{When $D=-1$ this limit must be understood as infinity.} to $\pi/(D+1),$  when the orbits approach to the boundary of the period annulus.
\end{itemize}

\end{MainTheorem}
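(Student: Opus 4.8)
The plan is to prove Theorem B by transforming system~\eqref{loud} into a scalar first-order planar Hamiltonian-type form and then applying the monotonicity criterion of~\cite{Villadelprat} in the adaptation of~\cite{Liu}. First I would perform the standard change of variables that brings the reversible quadratic center into the canonical framework required by the criterion. Because system~\eqref{loud} is time-reversible with respect to the symmetry $(x,y,t)\mapsto(x,-y,-t)$, the orbits are symmetric about the $x$-axis, so each periodic orbit meets the $x$-axis at exactly two points, an inner point $x=x_-<0$ and an outer point $x=x_+\in(0,1)$; I would parametrize the period annulus by these intersection points and write $\mathbf T$ as an integral over $x$ between consecutive zeros of an associated potential. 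The explicit substitution $F=D+1$ makes the right-hand side of the $\dot y$ equation a perfect combination, and I expect that after multiplying by a suitable Dulac-type integrating factor one obtains a Hamiltonian system, from which $\mathbf T$ becomes a period integral of a one-degree-of-freedom potential system.

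Once the period is written as $\mathbf T(h)=\oint \mathrm{d}t$ along the level curve $\{H=h\}$, the key step is to express $\mathbf T$ in the Abelian-integral form to which the criterion applies. Following~\cite{Villadelprat,Liu}, I would introduce the variable measuring the energy level and rewrite $\mathbf T(h)$ as a quotient of integrals involving the potential $V$ and its derivatives; the criterion reduces monotonicity of $\mathbf T$ to a sign condition on an explicitly computable function built from $V$, typically of the form that the map $x\mapsto V(x)/(x V'(x))$ or an analogous Schaaf-type quotient is monotone on each branch. The substitution $F=D+1$ should make these auxiliary functions rational in $x$ with coefficients depending affinely on $D$, which is precisely the feature that makes the Lotka–Volterra-type argument of~\cite{Liu} go through.

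The main obstacle I anticipate is verifying the required sign condition uniformly for all $D\in(-1/2,0)$, since the text emphasizes that every previously known criterion fails on this range; thus the decisive technical work is to show that the particular criterion of~\cite{Liu} does succeed here. Concretely, I expect the monotonicity to reduce to proving that a certain polynomial in $x$ (the numerator of the derivative of the Schaaf/Villadelprat quotient), with coefficients that are polynomials in $D$, has a constant sign on the relevant $x$-interval for each fixed $D\in(-1/2,0)$; establishing this definite sign, rather than merely checking it numerically, is the crux and will likely require a careful discriminant or Sturm-sequence analysis, or an explicit factorization exploiting the special structure $F=D+1$.

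For the quantitative boundary assertions in parts~(i) and~(ii), I would argue separately. The value $\mathbf T\to 2\pi$ at the origin follows from the linearization and the local expansion~\eqref{eq:t2}. For the outer behaviour I would locate the boundary of the period annulus: when $D\in(-\infty,-1)\cup(0,+\infty)$ the relevant singular level is the homoclinic loop through the saddle at $(-1/D,0)$, along which the period integral diverges, giving $\mathbf T\to+\infty$; when $D\in[-1,0]$ the annulus fills the half-plane $\mathcal B=\{x<1\}$ and the orbits escape to the invariant line $x=1$, so I would compute the limiting period integral along the boundary orbit explicitly and obtain the stated value $\pi/(D+1)$ (with the convention that this is $+\infty$ at $D=-1$). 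Finally, the isochronicity at $D=-1/2$ and $D=0$ would follow either from $P_2=0$ together with the established monotonicity forcing a constant period, or more directly by exhibiting a linearizing change of coordinates; the sign of $P_2=\pi D(2D+1)$ recorded in~\eqref{eq:t2} already fixes the direction of monotonicity and pins down these two isochronous values as the boundary cases of the decreasing régime.
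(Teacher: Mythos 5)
There is a genuine gap at the heart of your plan: you propose to apply the Ma\~nosas--Villadelprat/Liu criterion \emph{directly} to the period function, i.e.\ to the Abelian integral $T(h)=\int_{\gamma_h}\ell(u)/v\,\operatorname{d}\!u$, reducing monotonicity to a sign condition on a Schaaf-type quotient built from the potential $V$. This is precisely what fails here. After the change $u=x$, $v=y(1-x)^{-(D+1)}$ one does obtain a potential system with $V(u)=\frac12 u^2(1-u)^{-2(D+1)}$, but taking $g=\ell$ in Proposition~\ref{prop:Liu} produces a function $\mathbf{\Pi}_{\sigma}(\ell)$ that \emph{changes sign} in $(0,u_r)$, so the criterion gives nothing for $T$ itself; likewise, the Schaaf-type monotonicity conditions you invoke are among the known criteria that have been checked to fail for $D\in(-1,0)\setminus\{-1/2\}$ --- this is the very reason the problem remained open. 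The missing idea in the paper's proof is to apply the criterion not to $T$ but to the auxiliary Abelian integral $I(h)=\int_{\gamma_h}u(1-u)^{-3(D+1)}/v\,\operatorname{d}\!u$ of~\eqref{I}, and to connect it to $T$ through the identity $2hT'(h)+\frac{1}{D+1}I'(h)=0$. That identity (Lemma~\ref{lem:igualtat}) is obtained by combining a Gelfand--Leray derivative computation, showing $A'(h)=T(h)$ for $A(h)=-\int_{\gamma_h}(1-u)^{-(D+2)}v\,\operatorname{d}\!u$, with an integration by parts giving $2A(h)=2hT(h)+\frac{1}{D+1}I(h)$. Non-vanishing of $I'$ on $h>0$, established via Proposition~\ref{prop:Liu} with $g(u)=u(1-u)^{-3(D+1)}$, then forces strict monotonicity of $T$. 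Without this detour through $I$, the reduction you anticipate --- a polynomial in one variable of constant sign --- never materializes, because the function you would actually have to control genuinely changes sign.

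Two smaller points. First, your argument for isochronicity at $D=-1/2$ and $D=0$ (``$P_2=0$ together with the established monotonicity forcing a constant period'') is circular: the monotonicity proof does not cover those parameter values, and $P_2=0$ alone never implies isochronicity; the paper simply invokes the classification of isochronous reversible quadratic centers in~\cite{CS}, which contains $(D,F)=(-1/2,1/2)$ and $(0,1)$. Second, for the outer limit $\pi/(D+1)$ when $D\in(-1,0)$, computing the passage time along the invariant line $x=1$ as in~\eqref{eq:t} is not by itself sufficient: one also needs that the extra time spent by nearby periodic orbits to close up, away from this line, tends to zero, which does not follow from plain continuous dependence on an unbounded domain; the paper quotes \cite[Prop.~5.2]{MMV0} for exactly this. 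Your anticipated Sturm/discriminant analysis is indeed where the technical bulk lies (resultants in $w$, a bifurcation argument in $D$ with critical values $D_0,D_1,D_2$, and Sturm sequences over algebraic extensions), but that machinery only becomes applicable after the switch from $T$ to the auxiliary integral $I$.
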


As we will see, the asymptotic value of the period function at the boundary of the period annulus is obtained from \cite[Prop. 5.2]{MMV}.  There are results in the Theorem that are already known and we included them for completeness. Indeed, the shape of the period annulus for each value of the parameter is given in \cite{MMV} and the monotonicity for $D\in(-\infty,-1)\cup(0,+\infty)\setminus\{1\}$ was proved in \cite[Thm A]{Villadelprat0}. Moreover, the case $D=1$ was studied in \cite{Cho} as part of the proof of monotonicity for $F=2$. Also, it is well-known that the only isochronous systems in~\eqref{loud0} are
 	\[(D,F)\in\{(-1/2,1/2), (-1/2,2), (0,1/4), (0,1) \},\]
see for instance~\cite{CS}. Notice that the cases $D=-1/2$ and $D=0$ in system~\eqref{loud} precisely correspond to two of them.

In short, our contribution then is the completeness of the study of the monotonicity of the straight line $F=D+1$ to the segment $D\in[-1,0)$, which was unknown to the best of our knowledge (with the exception of the isochronous center). As we have explained, our proof on this segment is based on the general criterion for potential systems commented above. This criterion is introduced in next section and all the rest of the proof is self-contained. Moreover, in Subsection~\ref{se:nova} we study the period function of the other center in family~\eqref{loud} and use another idea that allows to reduce the study of the period function on the above segment to a segment of the family $D=-F,$ providing an alternative proof based totally on the main result of~\cite{Liu}.

The paper is organized as follows. In Section 2 we introduce the monotonicity criterium from~\cite{Liu}. Sections 3 and 4 are devoted to the proofs of Theorems~\ref{thmB} and~\ref{thmA}, respectively.

\section{The monotonicity criterium}
Consider a Hamiltonian system with a Hamiltonian function of the form $H(x,y)={y^2}/{2}+V(x)$ and integrating factor $\ell(x)$, where both $V$ and $\ell$ are analytic functions in a neighborhood of $x=0$. That is, the differential system is written as
\begin{equation}\label{system}
    \dot{x}= -\dfrac{y}{\ell(x)}, \;
    \dot{y}= \dfrac{V'(x)}{\ell(x)}.
\end{equation}
If $V'(0)=0$ and $xV'(x)>0$ for all $x\approx 0$, $x\neq 0$, the origin is a center. The period annulus is the largest punctured neighborhood of the origin foliated by periodic orbits and each periodic orbit of the period annulus can be parametrized by an energy level $h\mapsto\gamma(h)\subset\{(x,y)\in\mathbb{R}^2:H(x,y)=h\}$. The period function of the center assigns to each periodic orbit of the period annulus its minimal period. Due to the previous parameterization and the particular expression of the vector field, the period function can be expressed as an Abelian integral function of the energy,
\begin{equation}\label{period}
T(h):=\int_0^{T(h)}\,\operatorname{d}\!t = \int_{\gamma(h)} \frac{\ell(x)}{y}\,\operatorname{d}\!x.
\end{equation}
Notice that $T(h)={\bf T} \big(\sqrt{2h}\,\big),$ where ${\bf T}$ is given in~\eqref{eq:t2}.

Let us denote by $\mathcal{I}=(x_{\ell},x_r)$ the projection on the $x$-axis of the period annulus of the center at the origin, with $x_{\ell}<0<x_r$. Since $xV'(x)>0$ for all $x\in\mathcal{I}\setminus\{0\}$, the potential function $V$ defines an analytic involution $\sigma$ on $\mathcal{I}$ by
\[
V(\sigma(x))=V(x) \text{ for all }x\in\mathcal{I}\setminus\{0\},
\]
and $\sigma(0)=0$. The following criterium is introduced in \cite[Prop. 2.2]{Liu} as an adaptation of \cite[Thm.~A]{Villadelprat}. 

\begin{prop}\label{prop:Liu}
Let us consider an Abelian integral of the form $I(h)=\int_{\gamma_h}{g(x)}/{y}\,\operatorname{d}\!x$, where $g(x)$ is an analytic function on $\mathcal{I}.$ Define
\[
\mathbf{\Pi}_{\sigma}(g)(x):=\frac{f(x)-f(\sigma(x))\sigma'(x)}{2},\quad\mbox{where}\quad f(x)=-\frac{g(x)}{2}+\Big(\frac{g(x) V(x)}{V'(x)}\Big)'.
\]
If $\mathbf{\Pi}_{\sigma}(g)(x)$ has no zeros in $(0,x_r)$ then $I'(h)$ has no zeros.
\end{prop}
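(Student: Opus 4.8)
The plan is to establish the claimed identity relating the Abelian integral $I(h)$ and its derivative to the operator $\mathbf{\Pi}_\sigma(g)$, following the template of \cite{Villadelprat} as adapted in \cite{Liu}. The starting point is the parametrization of each orbit $\gamma_h$ by the energy level $H(x,y)=y^2/2+V(x)=h$, which gives $y=\pm\sqrt{2(h-V(x))}$. Writing the integral over the full loop and using the symmetry of the two branches $y>0$ and $y<0$, I would express
\begin{equation*}
I(h)=\int_{\gamma_h}\frac{g(x)}{y}\,\operatorname{d}\!x=\int_{x_{\ell}(h)}^{x_r(h)}\frac{\sqrt{2}\,g(x)}{\sqrt{h-V(x)}}\,\operatorname{d}\!x,
\end{equation*}
where $x_{\ell}(h)<0<x_r(h)$ are the two turning points on the level curve, satisfying $V(x_{\ell}(h))=V(x_r(h))=h$. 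The key structural fact is that these turning points are related by the involution $\sigma$, namely $x_{\ell}(h)=\sigma(x_r(h))$, which is precisely what the hypothesis $xV'(x)>0$ on $\mathcal I$ guarantees.

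Next I would differentiate $I(h)$ with respect to $h$. The naive differentiation under the integral sign produces a non-integrable singularity at the endpoints, so the standard device is to first regularize the integrand by an integration by parts that exploits the relation $\frac{\operatorname{d}}{\operatorname{d}\!x}\sqrt{h-V(x)}=-V'(x)/(2\sqrt{h-V(x)})$. Concretely, I would write $g(x)=g(x)\cdot 1$ and integrate by parts against a primitive built from $V$, which is exactly the motivation for the auxiliary function $f(x)=-g(x)/2+\big(g(x)V(x)/V'(x)\big)'$ appearing in the statement; this is chosen so that the boundary terms vanish and the derivative $I'(h)$ acquires an integrand proportional to $f$ rather than $g$. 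After this manipulation, differentiating in $h$ and folding the integral over $(x_{\ell}(h),0)$ onto $(0,x_r(h))$ via the change of variables $x\mapsto\sigma(x)$ (whose Jacobian is $\sigma'(x)$, and under which $V(x)$ is invariant) should collapse the two halves into a single integral whose integrand is $f(x)-f(\sigma(x))\sigma'(x)$, i.e. twice $\mathbf{\Pi}_\sigma(g)(x)$.

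Assembling these pieces, I expect to arrive at a representation of the form
\begin{equation*}
I'(h)=\int_0^{x_r(h)}\frac{\mathbf{\Pi}_{\sigma}(g)(x)}{\big(h-V(x)\big)^{3/2}}\,\operatorname{d}\!x
\end{equation*}
up to a positive constant factor, in which the kernel $\big(h-V(x)\big)^{-3/2}$ is strictly positive on the open interval $(0,x_r(h))$. From such a formula the conclusion is immediate: if $\mathbf{\Pi}_\sigma(g)(x)$ has no zeros in $(0,x_r)$ then it has a constant sign there, the integrand has constant sign, and hence $I'(h)\ne0$ for every $h$ in the period annulus, so $I'(h)$ has no zeros.

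The main obstacle I anticipate is the careful handling of the endpoint behavior when differentiating and integrating by parts. At the turning points $V(x)=h$, both the integrand of $I(h)$ and the pieces of the integration by parts blow up, and one must verify that the boundary contributions genuinely cancel rather than merely appearing formally finite; this is where the specific algebraic form of $f$ is essential, and getting the powers of $(h-V(x))$ and the factor $V'(x)$ to match at the endpoints is the delicate point. A secondary technical care is ensuring that $\sigma$ is a well-defined analytic involution with $\sigma'(x)<0$ on $\mathcal I\setminus\{0\}$, so that the change of variables is legitimate and orientation-consistent; this is supplied by the hypotheses but must be invoked explicitly. Once the regularized integral identity is in place, the sign argument itself is routine.
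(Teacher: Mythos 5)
First, a point of comparison that matters here: the paper itself gives no proof of Proposition~\ref{prop:Liu} at all --- it is quoted from \cite[Prop.~2.2]{Liu}, which in turn adapts \cite[Thm.~A]{Villadelprat} --- so your attempt can only be measured against the argument in those references. Your overall plan (regularize $I'(h)$ by means of the auxiliary function $f$, fold the integral over $(x_{\ell}(h),0)$ onto $(0,x_r(h))$ through the involution $\sigma$, and finish with a positive-kernel sign argument) is indeed the strategy of that proof, and your folding step is carried out correctly.

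However, the pivotal identity you ``expect to arrive at'' is false, and not merely up to constants. Near the turning point one has $h-V(x)\sim V'(x_r(h))\,(x_r(h)-x)$, so the kernel $(h-V(x))^{-3/2}$ is not integrable at $x_r(h)$; since $\mathbf{\Pi}_{\sigma}(g)$ is a fixed function, it cannot vanish at $x_r(h)$ for every $h$ unless it vanishes identically, so the integral in your display diverges and no such representation can exist. The correct exponent is $1/2$, and a factor $1/h$ appears. This is exactly the computation your sketch leaves schematic (the integration by parts ``against a primitive built from $V$''), and it is the step that actually produces $f$: setting $J(h):=\int_{\gamma_h}g(x)\,y\,\operatorname{d}\!x$ and $K(h):=\int_{\gamma_h}\bigl(\tfrac{gV}{V'}\bigr)'(x)\,y\,\operatorname{d}\!x$, Gelfand--Leray differentiation (legitimate because $y=0$ at the turning points) gives $J'(h)=I(h)$ and $K'(h)=\int_{\gamma_h}\bigl(\tfrac{gV}{V'}\bigr)'(x)\,\tfrac{\operatorname{d}\!x}{y}$; the energy relation $y^2=2(h-V(x))$ on $\gamma_h$ gives $J(h)=2hI(h)-2\int_{\gamma_h}\tfrac{g(x)V(x)}{y}\,\operatorname{d}\!x$, and since $\operatorname{d}\!H=0$ on $\gamma_h$ allows the exchange $\tfrac{\operatorname{d}\!x}{y}=-\tfrac{\operatorname{d}\!y}{V'(x)}$, an integration by parts along the closed curve identifies the last integral with $K(h)$. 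Differentiating $J=2hI-2K$ in $h$ and using $J'=I$ yields
\begin{equation*}
h\,I'(h)=\int_{\gamma_h}\frac{f(x)}{y}\,\operatorname{d}\!x,
\qquad f=-\frac{g}{2}+\Bigl(\frac{gV}{V'}\Bigr)',
\end{equation*}
and then your folding computation (using $V\circ\sigma=V$ and $\sigma'<0$) gives, up to the sign fixed by the orientation of $\gamma_h$,
\begin{equation*}
h\,I'(h)=2\sqrt{2}\int_0^{x_r(h)}\frac{\mathbf{\Pi}_{\sigma}(g)(x)}{\sqrt{h-V(x)}}\,\operatorname{d}\!x,
\end{equation*}
a convergent integral with strictly positive kernel. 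Since $h>0$ on the period annulus, the sign conclusion follows as you say. So: right plan, but the central display of your proposal cannot be reached, and the derivation that replaces it --- which fixes both the exponent and the $1/h$ factor --- is the substantive content of the proof and is missing from your attempt.
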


The period function~\eqref{period} is an Abelian integral of the type requested by the Proposition, but the direct application for proving monotonicity is not possible. It can be seen that, by taking $g=\ell$, the corresponding $\mathbf{\Pi}_{\sigma}(\ell)(x)$ changes sign in $(0,x_r)$. As we will show, in order to prove the monotonicity, an auxiliar Abelian integral with a different function $g$ can be used instead.

\section{Study of a family of quadratic reversible systems}

The change of variables given by $\{u=x, v=y(1-x)^{-(D+1)}\}$ transforms system~\eqref{loud} into the equivalent system
\begin{equation}\label{sys2}
    \begin{cases}
        \dot{u}&=-v(1-u)^{D+2},\\
        \dot{v}&=u(1+Du)(1-u)^{-(D+1)},
    \end{cases}    
\end{equation}
with first integral $H(u,v)={v^2}/{2}+V(u)$, where
\[
V(u):=\frac{1}{2}u^2(1-u)^{-2(D+1)}.
\]
Notice that this change is well defined in the whole period annulus of the origin. When $D\in(-1/2,0)$, the projection of the period annulus is $(u_{\ell},u_r)=(-\infty,1)$ and the center is located at $u=0$.
The period function of the center at the origin of~\eqref{sys2} can be written as
\[
T(h)=-\int_{\gamma_h}\frac{(1-u)^{-(D+2)}}{v}\,\operatorname{d}\!u,
\]
where $\gamma_h\subset\{(u,v)\in\mathbb{R}^2:H(u,v)=h\}$ is the closed orbit lying on the level curve $H=h$. Let us define
\begin{equation}\label{I}
	A(h):=-\int_{\gamma_h}(1-u)^{-(D+2)}v\,\operatorname{d}\!u \,\,\text{ and }\,\,
    I(h):=\int_{\gamma_h}\frac{u(1-u)^{-3(D+1)}}{v}\,\operatorname{d}\!u.
\end{equation}

\begin{lem}\label{lem:igualtat}
    For each $h\in(0,+\infty)$, $A'(h)=T(h)$ and $2hT'(h)+\frac{1}{D+1}I'(h)=0$.
\end{lem}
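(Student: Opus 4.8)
The first identity is the easy one and I would dispatch it by differentiating $A$ under the integral sign. Since the integrand of $A(h)=-\int_{\gamma_h}(1-u)^{-(D+2)}v\,du$ carries the factor $v$, which vanishes at the two turning points of $\gamma_h$, the boundary contributions coming from the $h$-dependence of these endpoints vanish, so differentiation under the integral sign is legitimate. Using that $v^2=2(h-V(u))$ on $\gamma_h$, and hence $\partial_h v=1/v$ on both branches, I obtain
\[
A'(h)=-\int_{\gamma_h}(1-u)^{-(D+2)}\,\partial_h v\,du=-\int_{\gamma_h}\frac{(1-u)^{-(D+2)}}{v}\,du=T(h).
\]

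For the second identity the key idea is to \emph{not} differentiate $T$ and $I$ directly, since that would introduce the weight $1/v^3$ and integrals that diverge at the turning points. Instead I would first establish the purely algebraic relation
\[
2hT(h)-\tfrac{1}{D+1}I(h)=2A(h),
\]
valid for every $h$, and only then differentiate it: differentiation yields $2T+2hT'-\tfrac1{D+1}I'=2A'$, and substituting the first identity $A'=T$ cancels the term $2T$ and leaves the claimed relation between $T'(h)$ and $I'(h)$.

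To prove the displayed algebraic identity I would combine two ingredients, keeping throughout only nonnegative powers of $v$ in the numerators so that every boundary term vanishes. First, inserting $2h=v^2+2V(u)=v^2+u^2(1-u)^{-2(D+1)}$ into $2hT$ splits it as $2hT=A-\int_{\gamma_h}u^2(1-u)^{-(3D+4)}/v\,du$, the first piece being exactly $A$ because $v^2/v=v$. Second, since $\frac{d}{du}(\psi(u)v)$ integrates to zero around $\gamma_h$ (its primitive $\psi v$ vanishes where $v=0$), one has the integration-by-parts identity $\int_{\gamma_h}\psi V'/v\,du=\int_{\gamma_h}\psi' v\,du$ for any analytic $\psi$. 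Taking $\psi(u)=(1-u)^{-(D+1)}/(D+1)$, so that $\psi'(u)=(1-u)^{-(D+2)}$ and, using $V'(u)=u(1+Du)(1-u)^{-(2D+3)}$, $\psi V'=\tfrac{1}{D+1}u(1+Du)(1-u)^{-(3D+4)}$, this ties the two Abelian integrals $\int_{\gamma_h}u(1-u)^{-(3D+4)}/v\,du$ and $\int_{\gamma_h}u^2(1-u)^{-(3D+4)}/v\,du$ to $A$. Finally the elementary relation $u(1-u)^{-3(D+1)}=(u-u^2)(1-u)^{-(3D+4)}$ expresses $I$ through the same two integrals, and eliminating them closes the system and produces the algebraic identity.

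The main obstacle is precisely the analytic handling at the turning points: the naive derivatives of $T$ and of $I$ produce $\int_{\gamma_h}(\cdots)/v^3\,du$, which is divergent, so the whole argument must be organized so that one only ever differentiates or integrates by parts expressions in which $v$ occurs with a nonnegative power (as in $A$ and in $\frac{d}{du}(\psi v)$). Once that discipline is maintained, the remaining work is just the bookkeeping of the exponents of $(1-u)$, which is routine.
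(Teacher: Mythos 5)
Your proposal is correct and, in outline, it is the same proof as the paper's: establish $A'(h)=T(h)$ first, then prove a pointwise algebraic identity relating $A(h)$, $hT(h)$ and $I(h)$ on each level curve, differentiate it and substitute $A'=T$. The differences are technical. For the first identity the paper uses the Gelfand--Leray form of $\operatorname{d}\!\alpha$ together with Arnold's formula for $A'(h)$, while you differentiate under the integral sign using $\partial_h v=1/v$ on each branch; the two routes are equivalent and yours is the more elementary, though strictly speaking the vanishing of the integrand at the turning points does not by itself legitimize the interchange (the differentiated integrand $1/v$ is singular there), so one still needs the routine integrability argument near the turning points. For the algebraic identity, the paper adds the relation obtained from $2h=v^2+2V$ (its~\eqref{eq1}) to the integration-by-parts relation with $\psi(u)=(1-u)^{-(D+1)}/(D+1)$ (its~\eqref{eq2}) and simplifies the combined integrand in one stroke; your elimination of the two auxiliary integrals $J_1=\oint_{\gamma_h} u(1-u)^{-(3D+4)}v^{-1}\operatorname{d}\!u$ and $J_2=\oint_{\gamma_h} u^2(1-u)^{-(3D+4)}v^{-1}\operatorname{d}\!u$ is the same computation with different bookkeeping.

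There is, however, one substantive point: you obtain $2hT(h)-\frac{1}{D+1}I(h)=2A(h)$ and hence $2hT'(h)-\frac{1}{D+1}I'(h)=0$, whereas Lemma~\ref{lem:igualtat} states the identity with a plus sign. With the definitions~\eqref{I} and of $T$ as printed (all integrals over $\gamma_h$ with one and the same orientation), your sign is the correct one. Indeed,
\[
2(1-u)^{-(D+2)}V(u)-\frac{(1-u)^{-(D+1)}}{D+1}V'(u)
=\frac{u\big[(D+1)u-(1+Du)\big]}{D+1}(1-u)^{-(3D+4)}
=-\frac{u(1-u)^{-3(D+1)}}{D+1},
\]
so the penultimate display in the paper's proof drops a sign, and the identity should read $2hT'(h)=\frac{1}{D+1}I'(h)$. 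The discrepancy is immaterial for how the lemma is used later: Proposition~\ref{prop:Liu} only yields that $I'$ has no zeros, which forces $T'$ to have no zeros regardless of the sign, and the increasing/decreasing character is then read off from the local behaviour~\eqref{eq:t2} at the origin. But as a referee's note, your argument proves the corrected identity rather than the statement as literally printed, and it is the printed statement that should be amended.
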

\begin{proof}
The exterior derivative of $\alpha:=(1-u)^{-(D+2)}v$ is $\operatorname{d}\!\alpha=(1-u)^{-(D+2)}\operatorname{d}\!u\wedge\operatorname{d}\!v$ and it admits a Gelfand-Leray form with respect to $H(u,v)$ given by
\[
\frac{\operatorname{d}\!\alpha}{\operatorname{d}\!H}=-\frac{(1-u)^{D+1}}{u(1+Du)}\operatorname{d}\!v.
\]
The derivative of the function $A(h)$ can be expressed as an integral along the same cycle $\gamma_h$ according with formula (3) in page 284 of \cite{Arnold} using the previous Gelfand-Leray form. That is,
\[
A'(h)=-\int_{\gamma_h}\frac{\operatorname{d}\!\alpha}{\operatorname{d}\!H}=\int_{\gamma_h}\frac{(1-u)^{D+1}}{u(1+Du)}\operatorname{d}\!v=-\int_{\gamma_h}\frac{(1-u)^{-(D+2)}}{v}\operatorname{d}\!u=T(h),
\]
where the third equality holds by $\operatorname{d}\!H=V'(u)\operatorname{d}\!u+v\operatorname{d}\!v=0$ on $\gamma_h$. This proves the first assertion of the lemma.

Using the identity $H(u,v)=h$, we have $v=\frac{2}{v}(h-V(u))$ and so
\[
A(h)=-\int_{\gamma_h}\frac{2h(1-u)^{-(D+2)}}{v}\,\operatorname{d}\!u + \int_{\gamma_h}\frac{2(1-u)^{-(D+2)}V(u)}{v}\,\operatorname{d}\!u.
\]
We point out that the first integral of the previous expression is related with $T(h)$. That is,
\begin{equation}\label{eq1}
A(h)=2hT(h) + \int_{\gamma_h}\frac{2(1-u)^{-(D+2)}V(u)}{v}\,\operatorname{d}\!u.
\end{equation}
In addition, integrating by parts,
\[
A(h)=-\int_{\gamma_h}(1-u)^{-(D+2)}v\,\operatorname{d}\!u=-\int_{\gamma_h}v\, \,\operatorname{d}\!\left(\frac{(1-u)^{-(D+1)}}{D+1}\right)=\int_{\gamma_h}\frac{(1-u)^{-(D+1)}}{D+1}\,\operatorname{d}\!v.
\]
Using again $\operatorname{d}\!H=0$ on $\gamma_h$ we can write the previous as
\begin{equation}\label{eq2}
A(h)=-\int_{\gamma_h}\frac{(1-u)^{-(D+1)}V'(u)}{(D+1)v}\,\operatorname{d}\!u.    
\end{equation}
By adding~\eqref{eq1} and~\eqref{eq2}, and using the expression of $V$,
\begin{align*}
   2A(h)&=2hT(h)+\int_{\gamma_h}\left(2(1-u)^{-(D+2)}V(u)-\frac{(1-u)^{-(D+1)}}{D+1}V'(u) \right)\frac{\,\operatorname{d}\!u}{v}\\
   &=2hT(h)+\int_{\gamma_h}\frac{u(1-u)^{-3(D+1)}}{D+1}\frac{\,\operatorname{d}\!u}{v}=2hT(h)+\frac{1}{D+1}I(h).
\end{align*}
Derivating with respect to $h$,
\[
2A'(h)=2T(h)+2hT'(h)+\frac{1}{D+1}I'(h).
\]
Since $A'(h)=T(h)$ by the first assertion of the result, then $2hT'(h)+\frac{1}{D+1}I'(h)=0,$ proving the second assertion of the lemma.
\end{proof}

\begin{proof}[Proof of Theorem~\ref{thmB}]

As mentioned at the introduction, we restrict ourselves to the segment $D\in[-1,0)$, since the monotonicity of the period function for the other parameter values is already known. The particular case $D=-1$ will be treated at the end of the proof because of its simple nature. Moreover, it is already known that $D=-1/2$ corresponds to an isochronous center. Thus, we are left with two open intervals $D\in(-1,-1/2)$ and $D\in(-1/2,0)$. According with the statement of the Theorem, we want to prove that the first of the intervals corresponds to parameters in which the period function is monotonous increasing whereas parameters in the second interval should have monotonous decreasing period. Notice that, once monotonicity is proved, the increasing or decreasing character is determined by the local behavior of the period function near the origin, which is increasing for $D\in(-1,-1/2)$ and decreasing for $D\in(-1/2,0)$ as we shown in~\eqref{eq:t2}. For the sake of brevity, we will focus the proof on the case $D\in(-1/2,0)$. The proof for the other interval is similar enough and we will just give some remarks at the end to highlight the differences.

The strategy for proving the monotonicity of~\eqref{period} is to use Proposition~\ref{prop:Liu} with the Abelian integral~\eqref{I}. The monotonicity then will follow by the second identity in Lemma~\ref{lem:igualtat}. Using the notation in Proposition~\ref{prop:Liu} we have 
\[
I(h)=\int_{\gamma_h}\frac{g(u)}{v}\,\operatorname{d}\!u,\quad \mbox{with}\quad g(u)= u(1-u)^{-3(D+1)},
\]
and
\[
f(u)=-\frac{g(u)}{2}+\Big(\frac{g(u) V(u)}{V'(u)}\Big)'=\frac{u(1-u)^{-3(D+1)}(1+2Du+D(1+2D)u^2)}{2(1+Du)^2}.
\]
Derivating the implicit expression of the involution, $V(\sigma(u))=V(u)$, we have $\sigma'(u)=\frac{V'(u)}{V'(\sigma(u))}$. 
Therefore,
\[
\mathbf{\Pi}_{\sigma}(g)(u)=\frac{f(u)-f(\sigma(u))\sigma'(u)}{2}=\frac{V'(u)}{2}\big(\phi(u)-\phi(\sigma(u))\big)
\]
with $\phi(u)={f(u)}/{V'(u)}$. 
In order to prove that $\mathbf{\Pi}_{\sigma}(g)(u)$ has no zeros on $(0,u_r)=(0,1)$ it is enough to show that the two curves defined by the identities
\begin{equation}\label{original}
V(u)-V(w)=0 \text{ and } \phi(u)-\phi(w)=0
\end{equation}
do not intersect for all $(u,w)\in(0,1)\times(-\infty,0)$. The rest of the section is devoted to prove this fact. 

The first identity of \eqref{original} can be written as
\begin{align*}
V(u)-V(w)&=u^2(1-u)^{-2(D+1)}-w^2(1-w)^{-2(D+1)}\\
&=(u(1-u)^{-(D+1)}+w(1-w)^{-(D+1)})(u(1-u)^{-(D+1)}-w(1-w)^{-(D+1)})=0.
\end{align*}
We note that the second factor does not vanish, since $-\infty<w<0<u<1$. Therefore, the equality $V(u)-V(w)=0$ is equivalent to
\begin{equation}\label{corba1}
F_1(u,w;D):=u(1-u)^{-(D+1)}+w(1-w)^{-(D+1)}=0.
\end{equation}

The second identity of \eqref{original} writes
\[
\frac{(1-u)^{-D}(1+2Du+D(1+2D)u^2)}{(1+Du)^3}-\frac{(1-w)^{-D}(1+2Dw+D(1+2D)w^2)}{(1+Dw)^3}=0.
\]
Since we are interested in $(u,w)$ satisfying both equalities of \eqref{original} to hold, we use~\eqref{corba1} on the previous to obtain an equivalent expression in terms of rational functions,
\begin{equation}\label{corba2}
F_2(u,w;D):=\frac{(1-u)(1+2Du+D(1+2D)u^2)}{u(1+Du)^3}+\frac{(1-w)(1+2Dw+D(1+2D)w^2)}{w(1+Dw)^3}=0. 
\end{equation}
On account of the sign of the polynomial $1+2Dw+D(1+2D)w^2$ we note that~\eqref{corba2} cannot hold for $w<w^*(D)<0$ with
\begin{equation}\label{left}
    w^*(D):=\frac{-D+\sqrt{-D(1+D)}}{D(1+2D)}.
\end{equation}
Thus we can restrict the argument to the bounded region 
\[
\Lambda:=\{(u,w)\in\mathbb{R}^2 : w^*(D)<w<0<u<1\}.
\]
Let $\Gamma_i(D):=\{(u,w)\in\Lambda: F_i(u,w;D)=0\}$. The previous discussion proves that the original curves~\eqref{original} intersect if and only if $\Gamma_1(D)\cap\Gamma_2(D)\neq 0$. We claim that each of the sets $\Gamma_i(D)$ is the graphic of a function and that these two functions do not intersect. Let us state it properly.

\noindent\textbf{Claim.} For each $D\in(-1/2,0)$, $\Gamma_i(D)$, $i=1,2$, defines a curve,
    $\Gamma_i(D)=\{(u,w)\in\Lambda : w=\psi_i(u;D)\}$
    with $\psi_i$ smooth functions and $\psi_1(u;D)\neq\psi_2(u;D)$ for all $u\in(0,1)$.

Notice that the part of the proof of Theorem~\ref{thmB} concerning the monotonicity of the period function follows from the Claim. Let us prove that the Claim is true. We start  showing the assertions concerning $\Gamma_1(D)$. The derivation of $F_1(u,w;D)$ with respect to $w$ yields to
    \[
        \frac{\partial F_1}{\partial w}(u,w;D)=\frac{1+Dw}{(1-w)^{D+2}}
    \]
    which is different from zero for all $w<0$ and $D\in(-1/2,0)$. Therefore, by the implicit function theorem, there exists a smooth function $\psi_1:(0,1)\rightarrow(-\infty,0)$ such that $w=\psi_1(u;D)$ and $F_1(u,\psi_1(u;D);D)=0$. By the derivation with respect to $u$ of the previous, we have
    \[
        \psi_1'(u;D)=-\frac{(1+Du)(1-\psi_1(u;D))^{D}(\psi_1(u;D)-1)^2}{(1+D\psi_1(u;D))(1-u)^{D}(u-1)^2}.
    \]
    Using again $F_1(u,\psi_1(u;D);D)=0$, we also have
    \[
        (1-\psi_1(u;D))^D = - \frac{\psi_1(u;D)(1-u)^{D+1}}{u(1-\psi_1(u;D))}.
    \]
    Thus, the substitution of the latest into the expression of $\psi_1'(u;D)$ implies
    \[
        \psi_1'(u;D)=\frac{\psi_1(u;D)(\psi_1(u;D)-1)(1+Du)}{u(u-1)(1+D\psi_1(u;D))}.
    \]

    The same argument can be followed for $\Gamma_2(D)$, showing that there exists a smooth function $\psi_2:(0,1)\rightarrow(-\infty,0)$ such that $F_2(u,\psi_2(u;D);D)=0$ with
    \[
    \psi_2'(u;D)=-\frac{P(u;D)}{P(\psi_2(u;D);D)}\frac{\psi_2(u;D)^2(1+D\psi_2(u;D))^4}{u^2(1+Du)^4}
    \]
    and $P(x;D)=-1-4Dx-2D(2D-1)x^2-2D(1+D+2D^2)x^3+D^2(1+2D)x^4$. Here we are using that $P(w;D)$ does not vanish for $w^*(D)<w<0$ and $-{1}/{2}<D<0$. Indeed, we have $P(0,D)=-1$ and, since $w^*(D)$ satisfies the quadratic expression $1+2Dw^*(D)+D(1+2D)w^*(D)^2=0$, one can reduce to
    \[
    	P(w^*(D),D)=-\frac{2(D+1)(4D^2+4D-1)w^*(D)+2(2D+3)(D+1)}{(1+2D)^2}<0
    \]
    for all $-{1}/{2}<D<0$. Moreover, the discriminant of $P(w;D)$ with respect to $w$ is 
    \[
    \Delta_w(D):=-16(D+1)^4D^4(304D^4 + 608D^3 + 296D^2 - 8D + 27),
    \]
    which does not vanish for $-{1}/{2}<D<0$. Therefore, the number of zeros of $P(w;D)$ on $(w^*(D),0)$ for any $D$ are the same. In particular, $P(w;-{1}/{3})$ has no zeros on $(w^*(-{1}/{3}),0)$ so $P(w;D)<0$ for all $w^*(D)<w<0$ and $-{1}/{2}<D<0$.
    
    The rest of the proof is to show that $\psi_1(u;D)\neq\psi_2(u;D)$ for any $D\in(-1/2,0)$. With this objective in mind, we first prove that the inequality holds for $D=-1/3$. 

    For $D=-1/3$, the functions $F_1$ and $F_2$ write
    \[
    F_1(u,w;-{1}/{3})=\frac{u}{(1-u)^{\frac{2}{3}}}+\frac{w}{(1-w)^{\frac{2}{3}}},
    \]
    and
    \[
    F_2(u,w;-{1}/{3})=\frac{3Q_2(u,w)}{uw(u-3)^3(w-3)^3},
    \]
    with $Q_2(u,w)=-9u(u-3)^3+3(81-270u+180u^2-36u^3+5u^4)w+(-243+540u-270u^2+18u^3-5u^4)w^2-(u+3)(-27+45u-21u^2+u^3)w^3-(u-1)(-9+6u+u^2)w^4.$ 
    We note that the set $\{(u,w)\in\Lambda:F_1(u,w;-1/3)=0\}$ is the same as the set $\{(u,w)\in\Lambda:Q_1(u,w)=0\}$ with $Q_1(u,w)=u^3(1-w)^2+w^3(1-u)^2$. The resultant
    \[
    \text{Res}(Q_1(u,w),Q_2(u,w),w)=32(u-1)^3u^6R(u)
    \]
    with $R(u)=531441-3188646u+8148762u^2-11455506u^3+9546255u^4-4776408u^5+1487889u^6-406782u^7+143856u^8-32238u^9+1593u^{10}-180u^{11}-4u^{12}.$ If $R(u)$ has no real roots in $(0,1)$ then there are no common zeros of $F_1$ and $F_2$ for $(u,w)$ with $u\in(0,1)$ and so the inequality is proved for $D=-1/3$. We can prove that $R(u)$ has no real roots on $(0,1)$ using Sturm's theorem. For the sake of shortness we do not show the explicit expressions of the Sturm sequence, that can be easily computed with the help of an algebraic manipulator.
    % Let $V(a)$ denote the number of sign changes of the Sturm's sequence at $u=a$. In our situation, $V(0)=6$ and $V(1)=6$. Therefore, Sturm's theorem implies that the number of real roots on $(0,1)$ is $V(0)-V(1)=6-6=0.$ 
    This proves $\psi_1(u)\neq\psi_2(u)$ for $D=-1/3$.   

    Let us finally prove that $\psi_1(u;D)\neq\psi_2(u;D)$ for any $D\in(-1/2,0)$. Assume, with aim of reaching contradiction, that there exist $D^*\in(-1/2,0)$ and $u^*\in(0,1)$ with $\psi_1(u^*;D^*)=\psi_2(u^*;D^*)$. This would imply the existence of a tangent contact between $\psi_1$ and $\psi_2$ for some $(u,D)=(\widehat{u},\widehat{D})$. In consequence, the desired result will follow once we show that
    \[
        \psi_1(u;D)-\psi_2(u;D)=0\;\; \text{ and }\;\; \psi_1'(u;D)-\psi_2'(u;D)=0
    \]
    do not have common solution for any $D\in(-1/2,0)$. Using the expressions of the derivatives computed above, the previous is equivalent to show that
    \[
    F_2(u,w;D)=0\;\; \text{ and }\;\;F_3(u,w;D):=\frac{w(w-1)(1+Du)}{u(u-1)(1+Dw)}+\frac{P(u;D)}{P(w;D)}\frac{w^2(1+Dw)^4}{u^2(1+Du)^4}=0
    \]
    do not intersect on $w^*(D)<w<0<u<1$ for any $D\in(-1/2,0)$. We emphasize that both equalities are given in terms of rational functions. In particular,
    \[
        P_2(u,w;D):=uw(1+Du)^3(1+Dw)^3F_2(u,w;D)
    \]
    and    
    \[
        P_3(u,w;D):=(u-1)u^2(1+Du)^4(1+Dw)P(w;D)F_3(u,w;D)/w,
    \]
    with $P_2(u,w;D)$ a polynomial of degree $7$ and $P_3(u,w;D)$ a polynomial of degree $11$. We omit their expressions for the sake of brevity. Consequently, common zeros of $F_2(u,w;D)=0$ and $F_3(u,w;D)=0$ are equivalent to common zeros of $P_2(u,w;D)=0$ and $P_3(u,w;D)=0$. 

    We now perform the resultant of $P_2(u,w;D)$, $P_3(u,w;D)$ with respect to $w$. That is,
    \[
    \text{Res}(P_2(u,w;D),P_3(u,w;D),w)=S(u;D)R_2(u;D),
    \]
    with 
    \[
    S(u;D)=-8D^9u^7(u-1)^3(1+2D)(D+1)^9(Du+1)^{21}(D(1+2D)u^2+2Du+1),
    \]
    and $R_2(u;D)$ a polynomial of degree $12$ in $u$. The polynomial $S(u;D)$ does not vanish for $u\in(0,1)$ and $D\in(-1/2,0)$. Let us show that $R_2(u;D)$ does not have zeros for $(u,D)\in(0,1)\times(-1/2,0)$ either. We will proceed with a bifurcation argument with respect to the parameter $D$. First, 
    \begin{equation}\label{R201}
    R_2(0;D)=54D(D+1) \text{ and } R_2(1;D)=4D(1+2D)^4(D+1)^9.
    \end{equation}
    Thus no roots enter or escape from the interval $(0,1)$ as we change the value of $D$ within its range. The possible changes in the number of roots of $R_2(u;D)$ in $(0,1)$ with respect to $u$ are then given by the zeros of the discriminant
    \[
    \Delta_u(D):=33554432D^{43}(D+1)^{43}(1+2D)^{32}K_0(D)^3K_1(D)^2K_2(D)^2K_3(D)^2,
    \]
    where $K_0(D):=22D^2+22D+1$, $K_1(D):=128D^4+256D^3+112D^2-16D-3$, and $K_2$ and $K_3$ are polynomials of degree $14$ and $8$, respectively. Let us locate the zeros of $\Delta_u(D)$ on $(-{1}/{2},0)$. $K_0$ and $K_1$ can be solved explicitly and the only roots in $(-{1}/{2},0)$ are 
    \[
    D_0:=-\frac{1}{2}+\frac{3\sqrt{11}}{22} \text{ and }D_1:=-\frac{1}{2}+\frac{\sqrt{5-\sqrt{7}}}{4}.
    \]
    Using again Sturm's theorem we can prove that $K_2$ has a root $D_2\approx -0.1279963$ and $K_3$ has no reals roots on $(-{1}/{2},0)$. The three roots $D_0$, $D_1$ and $D_2$ split the interval $(-1/2,0)$ in four sub-intervals where $R_2(u;D)$ has the same number of roots for any $D$ on each sub-interval. We can then choose any $D$ in each sub-interval to represent the number of zeros of $R_2(u;D)$ in $(0,1)\times(-1/2,0)\setminus\{D=D_i\}_{i=0}^2$ on it. Taking rational representative of $D$ on each sub-interval and using Sturm's theorem we can check that $R_2(u;D)$ has no zeros in $(0,1)$.

	The last part of the proof is devoted to show that $R_2(u;D_i),$ $i=0,1,2,$ has no zeros in $(0,1)$. Sturm's theorem can be directly applied to $R_2(u;D_i)$, $i=0,1$, by using it over the algebraic extension $\mathbb{Q}[D_i]$. For $R_2(u;D_2)$ the same argument can be done, but $D_2$ is implicitly defined by a degree $14$ polynomial, so the computations are more cumbersome. We avoid that by using an alternative argument, which could be also have been applied for the previous two cases. We first notice from the identities~\eqref{R201} that $R_2(u;D_2)$ is negative for $u=0$ and $u=1$. In order to show that $R_2(u;D_2)<0$ for all $u\in(0,1)$ we locate the root $D_2$ inside a suitable rational interval\footnote{These rational upper and lower bounds of $D_2$ are obtained by taking two suitable successive convergents of its continued fraction representation.}
    \begin{align*}
    D_2 &\in[\underline{\delta},\overline{\delta}]:=[-16/125,-267/2086]\approx [-0.128,-0.127996].
    \end{align*}
Indeed, $K_2(\underline{\delta})K_2(\overline{\delta})<0$.
	Thus, we can construct a polynomial $U(u)$ with rational coefficients satisfying $R_2(u;D_2)<U(u)$ by upper-bounding each monomial of $R_2(u;D)$ with $D=\underline{\delta}$ or $D=\overline{\delta}$ depending on the sign of the coefficient and the exponent of $D$. Finally, Sturm's theorem can be used again to show that $U(u)$ is negative in $(0,1)$. Then $R_2(u;D_2)<U(u)<0$ in $(0,1)$. This ends with the proof that $\psi_1(u;D)\neq\psi_2(u;D)$ for any $D\in(-1/2,0)$ and ends the proof of the Claim.  
	
 As we mentioned at the beginning of the proof, the monotonicity of the period function for $D\in(-1,-1/2)$ can be proved in similar fashion. The main difference with respect to the previous is  the treatment  of the system given by equations $P_2(u,w;D)=0$ and $P_3(u,w;D)=0.$ While in the first we start studying it by  computing the resultant of both polynomials with respect to $w,$  in this case, it is more convenient  the compute the resultant  with respect to $u$. We omit the details for the sake of shortness.
	
To finish the proof for $D\in(-1,0)$ it remains to study the behavior of the period function at the origin and at the exterior boundary of the period annulus. That it tends to $2\pi$ at the origin is a well-known fact because it is the period of the linear part of the differential equation, see also~\eqref{eq:t2}. On the other hand, to show that at the exterior boundary is at least $\pi/(D+1)$ it suffices to observe that $x=1$ is an invariant straight line for the flow of equation~\eqref{loud} and that the restricted flow on this line is given by $\dot y= (D+1)(1+y^2).$ Hence, the time $\mathcal{T}$ spend by the flow to travel along all this line is
\begin{equation}\label{eq:t}
\mathcal{T}=\frac1{D+1}\int_{-\infty}^{+\infty} \frac {1}{1+y^2}\,\operatorname{d}\! y=\frac{\pi}{D+1}.
\end{equation}	
By continuous dependence with respect initial conditions the total period of the periodic orbits near the boundary  tends to $\mathcal{T}$ plus the time spend to close the orbit. In fact, in \cite[Prop. 5.2]{MMV0} it is proved that this extra time tends to zero when we approach to the boundary. 

We end the proof with the special case $D=-1$. In this case, system~\eqref{loud} writes
\[
\dot{x}=-y(1-x),\quad
\dot{y}=x(1-x).
\]
It has a straight line $\{x=1\}$ of singularities and $H(x,y)=x^2+y^2$ is a first integral of the system. The period starts being $2\pi$ at the origin as-well, and it tends to infinity as approaching the boundary of the period annulus with the singularity at $(1,0)$. In particular, the period function can be explicitly computed by changing to polar coordinates. Indeed, setting $r^2=x^2+y^2$ and denoting $T(r)$ the minimal period of the orbit with radius $r$, 
\[
T(r)=\int_0^{2\pi} \frac 1{1-r\cos \theta}\,\operatorname{d}\! \theta=\frac{2\pi}{\sqrt{1-r^2}}.
\]
This proves the monotonicity and it ends the proof of the theorem. 
\end{proof}

\subsection{About the period function for the other center of system~\eqref{loud} and an alternative proof of Theorem~\ref{thmB}.}\label{se:nova}
For our interest on equation~\eqref{Zk}, it is not necessary to study the period function of the other center  $(0,-1/D)$  of  system~\eqref{loud}, that exists if and only if $D\in(-1,0).$ Nevertheless, for completeness we devote this section to this question.  As we will see, this approach  provides an alternative proof of Theorem~\ref{thmB}.

Before starting, we remark that when $D\in \mathbb{R}\setminus[-1,0]$ the other critical point $(0,-1/D)$  is a saddle and when $D=-1$ it belongs to a continuum of singularities. 

The key point to study this new period function is next lemma that holds for all quadratic reversible centers~\eqref{loud0}. Its proof follows by simple calculations.

\begin{lem}\label{le:nou} When $D\in(-1,0)$ system~\eqref{loud0} has two centers that are $(0,0)$ and $(-1/D,0).$ Moreover, with the change of variables and time
\[
u= \frac{Dx+1}{D+1},\quad v=\sqrt{\frac{-D}{D+1}}\,y\quad\mbox{and} \quad s=\sqrt{\frac{D+1}{-D}}\,t
\]	
the point $(-1/D,0)$ is translated to $(0,0)$ and the system is transformed into 
\begin{equation*}
	\begin{cases}
		{u}'=-v+uv,\\
		{v}'=u-(D+1)u^2+Fv^2,
	\end{cases} 
\end{equation*}
where the prime corresponds to the derivative with respect to $s.$	
\end{lem}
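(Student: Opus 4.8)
The plan is to treat the two assertions of Lemma~\ref{le:nou} separately: first the location and center character of the equilibria, and then the reduction to normal form under the prescribed change of variables and time.

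For the equilibria, I would impose $\dot x=\dot y=0$ in~\eqref{loud0}. Since $\dot x=y(x-1)$, every equilibrium satisfies $y=0$ or $x=1$; imposing $y=0$ in $\dot y=x(1+Dx)=0$ gives exactly the two points $(0,0)$ and $(-1/D,0)$ on the symmetry axis. To identify their type I would linearize: at the origin the Jacobian is $\bigl(\begin{smallmatrix}0&-1\\1&0\end{smallmatrix}\bigr)$ with eigenvalues $\pm i$, and at $(-1/D,0)$ it is $\bigl(\begin{smallmatrix}0&-(D+1)/D\\-1&0\end{smallmatrix}\bigr)$, whose eigenvalues satisfy $\lambda^2=(D+1)/D$. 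For $D\in(-1,0)$ one has $D+1>0$ and $D<0$, so $(D+1)/D<0$ and both points are linear centers. Since~\eqref{loud0} is invariant under the involution $(x,y,t)\mapsto(x,-y,-t)$ and both equilibria lie on the fixed axis $\{y=0\}$, reversibility promotes each linear center to a genuine nonlinear center. The same eigenvalue computation explains the remark preceding the lemma: for $D\in\mathbb{R}\setminus[-1,0]$ the quantity $(D+1)/D$ is positive, so $(-1/D,0)$ becomes a saddle.

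For the normal form, I would invert the change to $x=\big((D+1)u-1\big)/D$ and $y=v/\mu$ with $\mu:=\sqrt{-D/(D+1)}$, and record that $s=t/\mu$ yields $\tfrac{d}{ds}=\mu\,\tfrac{d}{dt}$. The computation then hinges on two factorizations forced by the shift, namely $x-1=(D+1)(u-1)/D$ and $x(1+Dx)=(D+1)u\big((D+1)u-1\big)/D$. Substituting into the first equation, $u'=\mu\cdot\tfrac{D}{D+1}\,\dot x=\mu\cdot\tfrac{D}{D+1}\,y(x-1)$, and the prefactors cancel to leave $u'=v(u-1)=-v+uv$. Substituting into the second equation, $v'=\mu^2\dot y$ with $\mu^2=-D/(D+1)$, and the common prefactor $(D+1)/D$ produced by the two factorizations cancels against $\mu^2$, leaving $v'=u-(D+1)u^2+Fv^2$. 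That $(-1/D,0)$ is sent to the origin is immediate from $u=\big(D(-1/D)+1\big)/(D+1)=0$.

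I do not anticipate a genuine obstacle: both parts reduce to elementary algebra, which is why the lemma is stated to follow ``by simple calculations''. The only points deserving explicit mention are, first, the reversibility argument, since a purely imaginary spectrum alone would not exclude a weak focus and it is the symmetry across $\{y=0\}$ that guarantees closed orbits; and second, the bookkeeping of the three constants in the change of variables and time, whose precise values in the statement are exactly those needed to normalize the linear part to the rotational form and to fix the coefficient of $u$ in the $v'$-equation equal to~$1$.
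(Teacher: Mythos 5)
Your proof is correct and is essentially the argument the paper has in mind: the paper gives no details, stating only that the lemma ``follows by simple calculations,'' and your computation (linearization plus reversibility across $\{y=0\}$ for the center character, then direct substitution of the affine change and time rescaling, with the factorizations $x-1=(D+1)(u-1)/D$ and $1+Dx=(D+1)u$) is precisely those calculations carried out explicitly.
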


A straightforward consequence  is:

\begin{cor} When  $D\in(-1,0)$ the period function  of the center $(-1/D,0)$ for system~\eqref{loud0} with the parameters $(D,F)$ behaves as the period function of the origin also for a system of the form~\eqref{loud0} with the same parameter $F$ but where $D$ is replaced by   $-D-1.$
\end{cor}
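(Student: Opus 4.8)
The plan is to read the conclusion directly off Lemma~\ref{le:nou}, so the proof reduces to identifying the transformed system and tracking the effect of the accompanying time rescaling on the periods. First I would invoke Lemma~\ref{le:nou}, which supplies an affine change of coordinates $(x,y)\mapsto(u,v)$ together with the constant time reparametrization $s=\sqrt{(D+1)/(-D)}\,t$ that sends the center $(-1/D,0)$ of~\eqref{loud0} to the origin of the transformed system. Since $D\in(-1,0)$ we have $-D/(D+1)>0$, so both square roots are real and the linear map is a genuine diffeomorphism; consequently it carries the periodic orbits surrounding $(-1/D,0)$ bijectively onto those surrounding the origin of the transformed system, identifying the two period functions as functions on the same family of orbits up to the time factor.

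Next I would observe that the transformed system
\begin{equation*}
u'=-v+uv,\qquad v'=u-(D+1)u^2+Fv^2,
\end{equation*}
is literally of the form~\eqref{loud0} after renaming $(u,v)$ as $(x,y)$: the $u'$-equation coincides with the first equation of~\eqref{loud0}, while the $v'$-equation has coefficient $-(D+1)=-D-1$ in front of $u^2$ and the same coefficient $F$ in front of $v^2$. Hence the transformed system is exactly~\eqref{loud0} with the pair $(D,F)$ replaced by $(-D-1,F)$. I would also record that $D\mapsto-D-1$ is an involution of the interval $(-1,0)$ with unique fixed point $D=-1/2$, so the new parameter again lies in $(-1,0)$ and the correspondence between the two centers is symmetric and consistent.

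Finally I would account for the time rescaling. From $s=\sqrt{(D+1)/(-D)}\,t$ one gets $\operatorname{d}\!t=\sqrt{(-D)/(D+1)}\,\operatorname{d}\!s$, so the minimal period in the original time $t$ of an orbit around $(-1/D,0)$ equals the fixed positive constant $\sqrt{(-D)/(D+1)}$ times the minimal period in $s$ of the corresponding orbit around the origin of~\eqref{loud0} with parameters $(-D-1,F)$. Because this factor does not depend on the orbit, the two period functions differ only by multiplication by a positive constant, and therefore share the same monotonicity and the same number and location of critical periods, which is precisely the asserted behaviour. There is no genuine obstacle here; the only points demanding care are confirming that the change of variables of Lemma~\ref{le:nou} is well defined and invertible for $D\in(-1,0)$ (which is exactly where $-D/(D+1)>0$ is needed) and checking that the time factor is a constant, so that it cannot alter the qualitative shape of the period function.
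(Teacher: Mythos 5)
Your proof is correct and follows essentially the same route as the paper: the corollary is stated there as a direct consequence of Lemma~\ref{le:nou}, and you simply make explicit what the paper leaves implicit, namely that the linear change of variables identifies the two period annuli and that the constant time rescaling multiplies the period function by the fixed factor $\sqrt{-D/(D+1)}$ (consistent with the paper's later observation that the period at the other center starts at $2\pi\sqrt{-D/(D+1)}$), so the qualitative behaviour is unchanged. No gaps.
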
	

Notice that the map $D\mapsto -D-1$ is an involution that leaves the interval $(-1,0)$ invariant.

When we apply the corollary to our particular system~\eqref{loud} with $D\in (-1,0)$ we obtain that if the origin corresponds to $(D,F)=(D,D+1)$ then the point $(-1/D,0)$ corresponds to $(D,F)=(-1-D, D+1)$ which is precisely on the straight line $F=-D,$ that is the one studied in~\cite{Liu}. In short, we can concentrate on system
\begin{equation}\label{loud2}
	\begin{cases}
		\dot{x}=-y+xy,\\
		\dot{y}=x+Dx^2-Dy^2,
	\end{cases}    
\end{equation}
with $D\in(-1,0)$.  For this new system and all values of $D,$ locally near the origin it holds that
\begin{equation*}
	{\bf T}(\rho)=2\pi +P_2\rho^2+O(\rho^3)=2\pi+(2D+1)^2\rho^2+O(\rho^3),
\end{equation*}
where we have used again the general expression of $P_2$ given after~\eqref{eq:t2} when $F=-D.$ Hence, except for $D=-1/2,$ the period function is always locally increasing at the origin. We know that when $D=-1/2$ the origin is an isochronous center. Moreover, by the main result of~\cite{Liu} we also know that otherwise the period function is globally increasing. As in Theorem~\ref{thmB} its limit value at the boundary of the corresponding period annulus can be determined according to the values of $D.$	
Hence, in short, for system~\eqref{loud}, the period function satisfies: 
\begin{itemize}
	\item  When $D\in(-1,-1/2)$ both centers have an increasing period function.
	\item  When $D\in(-1/2,0)$ the origin has a  decreasing period function and the other center $(-1/D,0)$ an increasing period function.
\end{itemize}
Moreover, at the origin it starts at $2\pi$ and at the other center  at $2\pi\sqrt{-D/(D+1)}$ and  it has limit period $\pi/(D+1)$ at the common boundary of both period annuli. In fact notice that
\[
\begin{cases}
	2\pi<2\pi\sqrt{\dfrac{-D}{D+1}}<\dfrac\pi{D+1}, &\quad\mbox{when}\quad D\in (-1,-1/2),\\[0.5cm]
	2\pi\sqrt{\dfrac{-D}{D+1}}<\dfrac\pi{D+1}<2\pi, &\quad\mbox{when}\quad D\in (-1/2,0).
\end{cases}
\] 
Of course, when $D=-1/2$ all four inequalities become equalities.

It is worthwhile to stress that  in~\cite{Liu} the authors also study both centers for system~\eqref{loud2}. By using their results around $(-1/D,0)$ together with Lemma~\ref{lem:nou} we can also deduce the montonicity of the period function of the origin for system~\eqref{loud}. This approach provides an alternative proof to Theorem~\ref{thmB}.

\section{Study of the period function of the $\mathbb{Z}_k$-equivariant differential equation}\label{sec:ProofThmA}

\begin{proof}[Proof of Theorem~\ref{thmA}] We start proving that equation~\eqref{Zk} can be brought to the quadratic system~\eqref{loud}. This fact was already shown in \cite[Prop. 3.1]{Gasull} and we include its proof for completeness.
 
Firstly, since $a\ne0$ it is easy to see that it is not restrictive to reduce the study to the case $a=1.$ Next, we can use polar coordinates $z=re^{i\theta}$ to write equation~\eqref{Zk} with $a=1$ as
\begin{equation}\label{sys_rt}
\frac{\operatorname{d}\! r}{\operatorname{d}\!t} = r^{2n+k+1}\cos(k\theta), \,\, 	\frac{\operatorname{d}\!\theta}{\operatorname{d}\!t}=1+r^{2n+k}\sin(k\theta).
\end{equation}

The change of variables $\{R=r^{2n+k},\Theta=k\theta\}$ and the rescaling of time $\tau=kt$ by the constant factor $k>0$ brings the previous to the system
\[
\frac{\operatorname{d}\!R}{\operatorname{d}\!\tau}=b R^2\cos\Theta, \,\, \frac{\operatorname{d}\!\Theta}{\operatorname{d}\!\tau} = 1+ R\sin\Theta,
\]
where $b:=1+{2n}/{k}$. Rewriting the system in cartesian coordinates $X+iY=Re^{i\Theta}$, we obtain
\begin{equation}\label{sys_XY}
\frac{\operatorname{d}\!X}{\operatorname{d}\!\tau}=-Y+bX^2-Y^2, \, \, \frac{\operatorname{d}\!Y}{\operatorname{d}\!\tau}=X+(1+b)XY.
\end{equation}
Now, using the change of variables given by $\{x=-(1+b)Y,y=-(1+b)X\}$ and changing the sign of time $s=-\tau$ we arrive to
\begin{equation}\label{sys_xy}
\frac{\operatorname{d}\!x}{\operatorname{d}\!s} = -y+xy, \, \, \frac{\operatorname{d}\!y}{\operatorname{d}\!s} = x-\frac{1}{1+b}x^2+\frac{b}{1+b}y^2,
\end{equation}
which is system~\eqref{loud} with  $D=-1/(1+b)=-k/(2(k+n))\in(-1/2,0).$

As we can see from the periodicity of system~\eqref{sys_rt} with respect to $\theta$, system \eqref{Zk} is $\mathbb{Z}_k$-equivariant. In particular, the period of a periodic orbit of~\eqref{Zk} around the origin is $k$ times the time spent by the orbit of \eqref{sys_rt} from  $\theta=0$ to $\theta={2\pi}/{k}.$ By the construction of system~\eqref{sys_XY}, this time is exactly $1/k$ times the period of the corresponding periodic orbit around the origin of~\eqref{sys_XY}, which equals the period of the periodic orbit around the origin of~\eqref{sys_xy}. Thus, we have that the period function at the origin of system~\eqref{Zk}  equals the period function at the origin of system~\eqref{loud} with  $D=-k/(2(k+n))\in(-1/2,0).$

Therefore, by Theorem~\ref{thmB}, the period function of the center at the origin of the quadratic reversible center~\eqref{loud} when $D\in(-1/2,0$) is monotonous decreasing. So, we also have that the period function of the center at the origin of equation~\eqref{Zk} is monotonous decreasing, proving the first part of the result.
 
The period function at the origin tends to $2\pi.$ The proof is the same that in Theorem~\ref{thmB}. The proof at the boundary of the period annulus follows from the similar result proved in Theorem~\ref{thmB}. Indeed, from the previous discussion the period function of the center at the origin of~\eqref{Zk} equals the period function at the origin of~\eqref{loud} with $D=-k/(2(k+n)).$ In particular, they have the same limit when approaching the boundary of the period annulus, which is given by
\[
	\mathcal{T} = \frac \pi{D+1}=\frac{2(k+n)\pi}{k+2n},
\]
where  $\mathcal{T}$ is introduced in~\eqref{eq:t}. 
\end{proof}

For completeness, we end this section by studying the period function for the cases $n=0$ or $k=0$ in the differential equation~\eqref{Zk}.

\begin{prop}\label{lem:nou} (i) The origin of the differential equation~\eqref{Zk} with $n=0$ is an isochronus center.

(ii) The origin of the differential equation~\eqref{Zk} with $k=0$ is a center if and only if $\operatorname{Re}(a)=0.$ Moreover, when $a=\alpha i,$ $\alpha\in\mathbb{R},$ the function $V(z,\overline z)=z\overline z$ is a first integral of the equation and the period function of the origin parameterized by $u=z\overline z$ is
\begin{equation}\label{eq:per}
	T(u)=\frac{2\pi}{1+\alpha u^n}.
\end{equation}
In particular, when $\alpha>0$ the origin is a global center with decreasing period function that tends to zero at infinity, and when $\alpha<0$ the period annulus of the origin is $\{z\overline z\le (-1/\alpha)^{1/n}\},$ its boundary is full of equilibria and the period function is increasing and tends to infinity at this boundary.	
\end{prop}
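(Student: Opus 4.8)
The plan is to treat the two cases separately, in each case exploiting the special structure that arises when one of the two exponents in~\eqref{Zk} collapses.

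For part (i), when $n=0$ equation~\eqref{Zk} becomes $\dot z = iz + a z^{k+1}$. The idea is to use polar coordinates $z=re^{i\theta}$ exactly as in the proof of Theorem~\ref{thmA}, obtaining the analogue of~\eqref{sys_rt} but now with $R=r^{k}$ (the case $n=0$). The key observation is that the change of variables and time rescaling carried out in the proof of Theorem~\ref{thmA} still applies and brings the equation to a quadratic system of the form~\eqref{loud} with $D=-k/(2k)=-1/2$. Since we already know from Theorem~\ref{thmB} that $D=-1/2$ corresponds to an isochronous center, the origin of~\eqref{Zk} with $n=0$ is isochronous as well. Alternatively, one can argue directly: writing the equation in polar form shows that $\dot\theta$ depends on $\theta$ but the angular period is forced to equal $2\pi$ by the $\mathbb{Z}_k$-symmetry combined with the $n=0$ scaling, so no calculation beyond substituting $n=0$ into the reduction of Theorem~\ref{thmA} is needed.

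For part (ii), when $k=0$ the equation is $\dot z = iz + a(z\overline z)^n z$, which is rotationally symmetric. In polar coordinates this gives $\dot r = \operatorname{Re}(a)\, r^{2n+1}$ and $\dot\theta = 1+\operatorname{Im}(a)\, r^{2n}$, so the radial equation decouples. The origin is a center if and only if the radial flow is trivial, i.e. $\operatorname{Re}(a)=0$; otherwise orbits spiral and no center exists. Setting $a=\alpha i$ with $\alpha\in\mathbb{R}$, one checks that $\dot r=0$, so $r$ (equivalently $u=z\overline z=r^2$) is constant along orbits, confirming that $V=z\overline z$ is a first integral. On each level $u=r^2$ the angular motion is uniform with $\dot\theta = 1+\alpha r^{2n}=1+\alpha u^n$, and since the orbit closes after $\theta$ advances by $2\pi$, the minimal period is $T(u)=2\pi/(1+\alpha u^n)$, which is~\eqref{eq:per}. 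The monotonicity and boundary statements then follow by inspection of this explicit formula: for $\alpha>0$ the denominator is increasing in $u$, so $T$ decreases to $0$ as $u\to+\infty$ and the whole plane is a period annulus; for $\alpha<0$ the denominator $1+\alpha u^n$ decreases to $0$ as $u\uparrow(-1/\alpha)^{1/n}$, where $\dot\theta$ vanishes and the circle becomes a curve of equilibria, so $T$ increases to $+\infty$ at that boundary circle.

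Both parts are essentially direct computations once the right coordinates are chosen, so I do not anticipate a serious obstacle. The only point requiring a little care is the equivalence in part (ii) asserting that $\operatorname{Re}(a)=0$ is \emph{necessary} for a center: this needs the observation that when $\operatorname{Re}(a)\neq0$ the radial equation $\dot r=\operatorname{Re}(a)r^{2n+1}$ forces $r$ to be strictly monotone along every nonconstant orbit near the origin, ruling out periodicity. I would state this explicitly to close the ``only if'' direction.
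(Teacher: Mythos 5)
Your proposal is correct, and part (ii) follows essentially the same route as the paper: the paper uses the Lyapunov function $V(z,\overline z)=z\overline z$, computes $\dot V=2\operatorname{Re}(a)(z\overline z)^{n+1}$ to settle the center condition, and then observes that for $a=\alpha i$ the real factor $1+\alpha(z\overline z)^n$ reparametrizes circular trajectories; your polar computation $\dot r=\operatorname{Re}(a)\,r^{2n+1}$, $\dot\theta=1+\operatorname{Im}(a)\,r^{2n}$ is the same argument in different notation, and you correctly close the ``only if'' direction via strict monotonicity of $r$. Part (i) is where you genuinely diverge. The paper's proof is one line: for $n=0$ the equation $\dot z=iz+az^{k+1}$ is holomorphic, and holomorphic equations have isochronous centers (citing \cite{CS}). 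You instead rerun the reduction from the proof of Theorem~\ref{thmA} with $n=0$, landing on system~\eqref{loud} with $D=-k/(2(k+n))=-1/2$, and invoke the isochronicity statement of Theorem~\ref{thmB}. This is legitimate: the changes of variables in that reduction (normalization to $a=1$, then $R=r^{2n+k}$, $\Theta=k\theta$, $\tau=kt$) nowhere require $n\geq1$, and the paper itself endorses exactly this identification in the Remark following the proposition. Note, however, that both roads end at the same citation, since the isochronicity of~\eqref{loud} at $D=-1/2$ asserted in Theorem~\ref{thmB} is itself quoted from the classification of isochronous quadratic centers in \cite{CS}; so your route is not more self-contained, only longer.

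One caveat on part (i): your ``alternative'' direct argument --- that isochronicity is ``forced by the $\mathbb{Z}_k$-symmetry combined with the $n=0$ scaling'' --- is not a proof and is false as a general principle. Every equation~\eqref{Zk} is $\mathbb{Z}_k$-equivariant, yet for $n\geq1$ the period function is nonconstant by Theorem~\ref{thmA}, so symmetry forces nothing about the angular period. Delete that sentence and keep only the reduction argument, which stands on its own.
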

	
\begin{proof}
(i) When $n=0$ the differential equation is holomorphic and the origin is an isochronous center, see~\cite{CS} or the references therein.
		
(ii) When $k=0$ consider the Lyapunov function $V(z,\overline z)=z\overline z.$ 
Then $\dot V(z,\overline z)=2 \operatorname{Re}(a)(z\overline z)^{n+1}.$ Hence the origin is a center if and only if  $\operatorname{Re}(a)=0$ as we wanted to prove. When $a=\alpha i$ the expression of the differential equation is
\[
	\dot{z}=iz\big(1+\alpha(z\overline{z})^n),
\]
where notice that $1+\alpha(z\overline{z})^n=1+\alpha u^n$ takes only real values and so it provides a reparametrization of the trajectories, that are circles centred at the origin. Writing it in polar coordinated we obtain the expression~\eqref{eq:per}. All the properties of $T(u)$ described in the statement follow by studying its graph.
\end{proof}	

\begin{rem}
Notice that in the proof of Theorem~\ref{thmA} it holds that $D=-1/2$ if and only if $n=0$. This corresponds to the isochronous case studied in the above proposition.
\end{rem}


\begin{thebibliography}{99}
	
\bibitem{AGP} M.J. Álvarez, A. Gasull, R. Prohens. Uniqueness of the limit cycles for complex differential equations with two monomials. {\it J. Math. Anal. Appl.} {\bf 518},  (2023) 126663.
	
	
\bibitem{Arnold0}
V.I. Arnol'd. {\it Chapitres supplémentaires de la théorie des équations différentielles ordinaires}  (in French). Editions Mir-Moscou, 1980.

	
\bibitem{Arnold}
V.I. Arnol'd, S.M. Guse\v{\i}n-Zade, A.N. Varshenko. {\it Singularities of differentiable maps. Vol II. Monodromy and asymptotics of integrals.} 
%Translated from the Russion by Hugh Porteous. Translation revised by the authors and James Montaldi. 
Monographs in Mathematics, \textbf{83}. Birkhäuser Boston, Inc., Boston, MA, 1988.



\bibitem{CS}
J. Chavarriga, M. Sabatini. A survey of isochronous centers. {\it Qual. Theory Dyn. Syst.} {\bf 1}(1), (1999) 1--70.

\bibitem{Chicone}
C. Chicone. The monotonicity of the period function for planar Hamiltonian vector fields. {\it J. Differential Equations} {\bf 69} (1987) 310--321.

\bibitem{Chicone2}
C. Chicone. Review in MathSciNet, Ref. 94h:58072.



\bibitem{Cho} R. Chouikha. Monotonicity of the period function for some planar differential
systems. Part I: Conservative and quadratic systems. {\it Appl. Math. (Warsaw)} {\bf 32}
(2005) 305--325.

\bibitem{Gasull}
A. Gasull. Some open problems in low dimensional dynamical systems. {\it SeMA Journal} {\bf 78} (2021) 233--269.

\bibitem{GGV}
A. Gasull, A. Guillamon, J. Villadelprat. The period function for second-order quadratic ODEs is monotone. {\it Qual. Theo. Dyn. Syst.} {\bf 4} (2004)  329--352.

\bibitem{Li}
J. Li. Hilbert's 16th problem and bifurcations of planar polynomial vector fields. {\it Int. J. Bifurc. Chaos Appl. Sci. Eng.} {\bf 13}(1) (2003) 47--106.

\bibitem{Liu}
J. Li, C. Li, C. Liu, D. Wang. The period function of reversible Lotka-Volterra quadratic centers. {\it J. Differential Equations} {\bf 307} (2022) 556--579.

\bibitem{Villadelprat}
F. Mañosas, J. Villadelprat. Criteria to bound the number of critical periods. {\it J. Differential Equations} {\bf 246} (2009) 2415--2433.


\bibitem{MMV0}
P. Marde\v{s}ić, D. Marín, J. Villadelprat. On the time function of the Dulac map for families
of meromorphic vector fields. {\it Nonlinearity} {\bf 16} (2003) 855.

\bibitem{MMV}
P. Marde\v{s}ić, D. Marín, J. Villadelprat. The period function of reversible quadratic centers. {\it J. Differ. Equ.} {\bf 224}(1) (2006) 120--171.

\bibitem{MV}
D. Marín, J. Villadelprat. The criticality of reversible quadratic centers at the
outer boundary of its period annulus. {\it J. Differ. Equ.} {\bf 332} (2022) 123--201.

\bibitem{Schaaf}
R. Schaaf. A class of Hamiltonian systems with increasing periods. {\it J. Reine Angew. Math.} {\bf 363} (1985) 96--109.

\bibitem{Villadelprat0} J. Villadelprat.
On the reversible quadratic centers with monotonic period function.
{\it Proc. Amer. Math. Soc.} {\bf 135}(8) (2007) 2555--2565. 

\end{thebibliography}
\end{document}